\DeclareMathOperator{\cone}{cone}
\DeclareMathOperator{\curv}{curv}\DeclareMathOperator{\Ann}{Ann}
\DeclareMathOperator{\Vol}{Vol}\DeclareMathOperator{\st}{st}
\DeclareMathOperator{\Td}{Td}\DeclareMathOperator{\NE}{NE}
\newcommand{\Zo}{\mathbb{Z}}
\newcommand{\Ro}{\mathbb{R}}
\newcommand{\Rg}{\mathbb{R}_{\geqslant 0}}
\newcommand{\Co}{\mathbb{C}}
\newcommand{\CP}{\mathbb{C}P}
\newcommand{\intX}{\int_{X_\Delta}}
\newcommand{\ct}{\widetilde{c}}
\newcommand{\dd}{\partial}
\newcommand{\D}{\mathcal{D}}
\newcommand{\F}{\mathcal{F}}
\newcounter{stmcounter}
\theoremstyle{plain}
\newtheorem{thm}[stmcounter]{Theorem}
\newtheorem{prop}[stmcounter]{Proposition}
\newtheorem{lem}[stmcounter]{Lemma}
\theoremstyle{definition}
\newtheorem{defin}[stmcounter]{Definition}
\theoremstyle{remark}
\newtheorem{rem}[stmcounter]{Remark}
\newtheorem{con}[stmcounter]{Construction}
\begin{document}

\title{Toric manifolds over 3-polytopes}

\author{Anton Ayzenberg}
\thanks{This work is supported by the Russian Science Foundation under grant
 14-11-00414}
\address{Steklov Mathematical Institute of Russian Academy of Sciences, Moscow}
\email{ayzenberga@gmail.com}

\subjclass[2010]{Primary 52B10, 14M25, 57N16; Secondary 05E40,
05E45, 52B20, 14C25, 53D05, 57N65, 57R19}

\keywords{quasitoric manifold, smooth toric variety, Delzant
polytope, cohomology ring, unimodular geometry, effective class,
fullerene}

\begin{abstract}
In this note we gather and review some facts about existence of
toric spaces over 3-dimensional simple polytopes. First, over
every combinatorial 3-polytope there exists a quasitoric manifold.
Second, there exist combinatorial 3-polytopes, that do not
correspond to any smooth projective toric variety. We restate the
proof of the second claim which does not refer to complicated
algebro-geometrical technique. If follows from these results that
any fullerene supports quasitoric manifolds but does not support
smooth projective toric varieties.
\end{abstract}

\maketitle

\section{Introduction}

For a 3-dimensional simple polytope $P$ one can construct a
6-dimensional manifold with the action of the compact torus $T^3$,
whose orbit space is $P$. The topology of this manifold tells a
lot about the combinatorics of the polytope. There exist several
constructions of such manifolds arising in different areas of
mathematics: toric varieties in algebraic geometry and singularity
theory, symplectic toric manifolds in symplectic geometry,
quasitoric manifolds in algebraic topology. Each construction
requires certain properties from the polytope, and these
properties affect the geometrical structure of the resulting
manifold. For example, the construction of a quasitoric manifold
as an identification space \cite{DJ} requires only the
combinatorial type of a polytope, and the resulting manifold is
just a topological manifold. However, if we fix the affine
realization of a polytope, the resulting quasitoric manifold
attains smooth structure, see \cite{BPR}. The construction of a
symplectic toric manifold requires a polytope to be Delzant
\cite{Delz}. There also exist certain conditions on the polytope,
which imply the existence of almost complex, or algebraical
structure on the corresponding manifold.

There are several natural questions. What does the existence of
certain geometrical structure on a toric space say about the
combinatorics of the polytope? The constructions of toric topology
and toric geometry allow to construct a lot of examples of
6-dimensional manifolds. But how large is the set of examples
having certain geometrical structure? In this paper we gather some
known results.

There is a well-known correspondence between toric varieties and
rational fans. Projective toric varieties correspond to normal
fans of convex polytopes and smooth projective toric varieties
correspond to unimodular polytopal fans (i.e. normal fans of
Delzant polytopes). Quasitoric manifolds are the
algebro-topological generalization of smooth projective toric
varieties. A smooth compact manifold $M$ of real dimension $2n$
with the action of half-dimensional compact torus $T^n$ is called
\emph{quasitoric} if
\begin{enumerate}
\item the action is locally standard (i.e. locally modeled by the
standard action of $T^n$ on $\Co^n$ by coordinate-wise rotations);
\item the orbit space (which is a manifold with corners as follows
from pt.1) is isomorphic to some simple polytope $P$.
\end{enumerate}
In this case we say that $M$ is a quasitoric manifold over $P$.
Recall that $n$-dimensional convex polytope is called
\emph{simple} if each of its vertices lies in exactly $n$ facets
(equivalently: each vertex lies in exactly $n$ edges). Among all
convex polytopes only simple polytopes are manifolds with corners.

Every smooth projective toric variety $X$ is a quasitoric
manifold: we can restrict the action of an algebraic torus
$(\Co^\times)^n$ on $X$ to its compact subtorus; and the orbit
space of this action can be identified with the image of the
moment map, which is a simple polytope. However there exist many
quasitoric manifolds which are not toric varieties. The simplest
example is $\CP^2\hash\CP^2$: this is a quasitoric manifold which
is not even algebraical. On the other hand there also exist smooth
non-projective toric varieties which are not quasitoric~\cite{Su}.

In this paper we discuss two basic theorems:

\begin{thm}[\cite{DJ,BP}]\label{thm3cols}
There exists a quasitoric manifold over any 3-dimensional simple
polytope.
\end{thm}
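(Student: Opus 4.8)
The plan is to reduce this topological statement to a combinatorial colouring problem and then invoke the Four Colour Theorem. First recall the Davis--Januszkiewicz identification-space construction \cite{DJ}. Given a simple $3$-polytope $P$ with facets $F_1,\dots,F_m$, call an assignment $\lambda\colon F_i\mapsto\lambda_i\in\Zo^3$ a \emph{characteristic function} if, whenever three facets $F_{i_1},F_{i_2},F_{i_3}$ meet in a vertex of $P$, the vectors $\lambda_{i_1},\lambda_{i_2},\lambda_{i_3}$ form a basis of $\Zo^3$. From such a $\lambda$ one forms $M(P,\lambda)=(T^3\times P)/{\sim}$, where $(t,p)\sim(t',p)$ iff $t^{-1}t'$ lies in the subtorus of $T^3$ generated by those $\lambda_i$ with $p\in F_i$; the basis condition makes $M(P,\lambda)$ a quasitoric manifold over $P$. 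Thus it suffices to produce a characteristic function on an arbitrary simple $3$-polytope.

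Next I would record the observation that makes dimension $3$ special: at each vertex $v$ of $P$ exactly three facets and exactly three edges meet, each edge at $v$ is the intersection of two of those three facets, and since $3=\binom{3}{2}$ the three edges realise the three pairs of facets at $v$. In particular any two facets through a common vertex share an edge. Hence if the facets of $P$ are coloured so that any two facets sharing an edge receive different colours, then the three facets through each vertex automatically carry three distinct colours.

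Such a colouring with four colours exists: $\partial P$ is a $2$-sphere tiled by the facets of $P$, so this is exactly a colouring of a map on $S^2$; equivalently, the facet-adjacency graph is the $1$-skeleton of the dual simplicial $3$-polytope $P^{\ast}$, which is planar and $3$-connected by Steinitz's theorem, hence $4$-colourable by the Four Colour Theorem. Fix a $4$-colouring $c\colon\{F_1,\dots,F_m\}\to\{1,2,3,4\}$, put $u_1=e_1$, $u_2=e_2$, $u_3=e_3$, $u_4=e_1+e_2+e_3$ in $\Zo^3$ (four primitive vectors), and set $\lambda_i:=u_{c(i)}$. At every vertex the three incident facets have three distinct colours, so $\lambda_{i_1},\lambda_{i_2},\lambda_{i_3}$ are three distinct vectors among $u_1,u_2,u_3,u_4$, and a routine $3\times 3$ determinant computation in each of the four possible cases gives $\det=\pm1$; hence they form a basis of $\Zo^3$. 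So $\lambda$ is a characteristic function and $M(P,\lambda)$ is the desired quasitoric manifold.

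The genuine input here is the Four Colour Theorem; everything else is bookkeeping, so that is the step I would flag as the ``hard part'', borrowed wholesale. I would also remark that for special polytopes three colours suffice --- this happens exactly when every facet of $P$ has an even number of edges --- but for a general simple $3$-polytope (for instance a fullerene, which has pentagonal facets) four colours are genuinely necessary, which is why $4$CT cannot be avoided in this argument. An alternative route would build $P$ up inductively from the simplex and extend a characteristic function along each step, but I expect that to be longer and more delicate; the colouring proof is the one recorded in \cite{DJ,BP}.
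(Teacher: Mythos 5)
Your proof is correct and follows essentially the same route as the paper: a four-colouring of the facets (via the Four Colour Theorem) converted into a characteristic function by the assignment $e_1,e_2,e_3,e_1+e_2+e_3$, then fed into the Davis--Januszkiewicz construction. Your extra remark that facets meeting at a vertex of a simple $3$-polytope are pairwise adjacent is a useful bit of justification the paper leaves implicit, but the argument is the same.
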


\begin{thm}[\cite{Delaunay}]\label{thmMain}
If there exists a smooth projective toric variety over a simple
3-dimensional polytope $P$, then $P$ has at least one triangular
or quadrangular face.
\end{thm}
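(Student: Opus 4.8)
\medskip

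\textbf{Reformulation.} I would first pass from varieties to fans. By the Delzant correspondence, a smooth projective toric variety over $P$ exists exactly when some simple polytope $Q$ combinatorially equivalent to $P$ is Delzant; equivalently, there is a complete simplicial fan $\Sigma$ in $\Ro^{3}$ with rays $\Rg\lambda_{F}$ indexed by the facets $F$ of $P$, with maximal cones $\sigma_{v}=\cone(\lambda_{F}\colon v\in F)$ indexed by the vertices $v$, each $\sigma_{v}$ unimodular, and such that $\Sigma$ is \emph{projective}: it carries a strictly convex support function $\psi\colon\Ro^{3}\to\Ro$. Fix such $\Sigma$ and $\psi$. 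For every facet $F$, the star of the ray $\Rg\lambda_{F}$ is a complete unimodular projective $2$-dimensional fan, so it is the fan of a smooth projective toric surface $X_{F}$ (the torus divisor of $\Sigma$ attached to that ray), and the polygon of $X_{F}$ is combinatorially $F$; thus every $2$-face of $P$ underlies a smooth projective toric surface. Such a surface is $\CP^{2}$, a Hirzebruch surface $\mathbb{F}_{a}$, or an iterated blow-up of these, with $X_{F}\cong\CP^{2}$ iff $F$ is a triangle and $X_{F}$ a Hirzebruch surface (including $\CP^{1}\times\CP^{1}$) iff $F$ is a quadrilateral.

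\textbf{$K_{X}$ is not nef.} For an edge $e=F_{a}\cap F_{b}$ of $P$, with $F_{c},F_{d}$ the third facets at its two endpoints, $\{\lambda_{a},\lambda_{b},\lambda_{c}\}$ and $\{\lambda_{a},\lambda_{b},\lambda_{d}\}$ are bases of $\Zo^{3}$, so there are unique integers $\mu_{e},\nu_{e}$ with $\lambda_{c}+\lambda_{d}=\mu_{e}\lambda_{a}+\nu_{e}\lambda_{b}$; one checks that the boundary curve $C_{e}$ of $X_{F_{a}}$ coming from $e$ has self-intersection $-\nu_{e}$, that it has self-intersection $-\mu_{e}$ inside $X_{F_{b}}$, and that $-K_{X}\cdot C_{e}=2-\mu_{e}-\nu_{e}$. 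Since the self-intersections of the boundary curves of a smooth complete toric surface with $d$ of them add up to $12-3d$ (Noether's formula $c_{1}^{2}+c_{2}=12$ together with $c_{2}=d$), summing this over all $2$-faces and using Euler's formula for the simple $3$-polytope ($V-E+m=2$ and $3V=2E$, hence $E=3m-6$ and $\sum_{F}d_{F}=2E$, with $m$ the number of facets) yields the clean identity $\sum_{e}(2-\mu_{e}-\nu_{e})=24$. In particular some invariant curve $C_{e_{0}}$ has $-K_{X}\cdot C_{e_{0}}>0$, so $K_{X}$ is not nef. (Moreover, if $P$ had no triangular or quadrilateral face, then the combinatorial Euler relation $\sum_{k}(6-k)\,p_{k}=12$ on the numbers $p_{k}$ of $k$-gonal faces forces $p_{5}\geqslant12$, hence $m\geqslant12$ and $\sum_{e}(\mu_{e}+\nu_{e})=6m-36\geqslant36$, so many invariant curves are $(-K)$-nonpositive.)

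\textbf{Extremal contraction.} Since $K_{X}$ is not nef, $X$ admits a $K$-negative extremal contraction $\phi\colon X\to X'$; I would reprove this in the toric setting and classify $\phi$ through the local shape of $\Sigma$ near a wall $\tau_{e}$ with $\mu_{e}+\nu_{e}$ large, using strict convexity of $\psi$. If $\phi$ is of fibre type, then either $X\cong\CP^{3}$ (so $P$ is a simplex, with triangular faces), or $\phi$ fibres $X$ over a curve or a surface and $P$ is combinatorially a prism over a polygon with at least $3$ edges, hence has a quadrilateral face (the "sides" of the prism) — and also a triangular one when that polygon is a triangle. If $\phi$ is divisorial, it contracts the divisor $X_{F}$ for some facet $F$ to a curve or to a point; the analysis of $\Sigma$ near the contracted wall shows $X_{F}$ is then $\CP^{2}$ or a Hirzebruch surface, so $F$ is a triangle or a quadrilateral; smoothness of $X$ rules out small contractions. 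In every case $P$ has a triangular or quadrilateral face, proving the theorem.

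\textbf{Main obstacle.} The crux is the last step: establishing, without invoking the full machinery of (toric) Mori theory, both the existence of the extremal contraction $\phi$ and its three-way classification, purely from the combinatorics of the unimodular fan $\Sigma$ and the strict convexity of $\psi$. Concretely this is a detailed case analysis of $\Sigma$ around an edge $e$ with $\mu_{e}+\nu_{e}$ large (equivalently $-K_{X}\cdot C_{e}$ negative) and of the elementary fan modification that "contracts" the corresponding ray; one must in particular keep track of how the $2$-faces of $P$ transform under this modification and identify which ones are forced to be triangles or quadrilaterals. By comparison, the reformulation, the dictionary between facets and toric surfaces, and the Euler-type identity are routine.
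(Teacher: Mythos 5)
Your first half is sound and is essentially the paper's computation in different clothes: the identity $\sum_e(2-\mu_e-\nu_e)=24$ is the unimodular Gauss--Bonnet identity $\intX c_1c_2=24$ (with $2-\mu_e-\nu_e=\intX v_J\,c_1$, the ``curvature'' of the wall $J$ dual to $e$), which you derive via Noether's formula on the facet surfaces instead of the Todd genus; either way it produces an invariant curve on which $-K_X$ is positive. But the proposal stops exactly where the real work begins. The existence of a $K$-negative \emph{extremal contraction} and its three-way classification (fibre type, divisorial, no small contractions, together with the identification of the contracted divisor as $\CP^2$ or a Hirzebruch surface and of the fibre-type cases as $\CP^3$ or prisms) is asserted, flagged by you as ``the main obstacle'', and never carried out. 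That statement is the toric cone-and-contraction theorem of Reid/Mori for smooth projective toric threefolds --- precisely the machinery on which Delaunay's original proof rests and which the present paper is written to avoid --- so as written the proposal is a plan for reproducing Delaunay's argument with its essential step missing rather than proved; ``$K_X$ not nef'' by itself gives none of the structural conclusions you list.

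The paper's point is that far less than a contraction theorem is needed. Two elementary ingredients suffice. First, the cone $\NE(X_\Delta)$ spanned by the wall classes $v_J$ is strictly convex: via the volume polynomial $V_\Delta$, the value of $\dd_JV_\Delta$ at the support parameters of a polytope with normal fan $\Delta$ is a positive multiple of the length of the edge dual to $J$, so all $v_J$ lie in an open half-space. Combined with your positivity of the functional $u\mapsto\intX u\,c_1$ on some wall class, this yields an \emph{extremal} wall $J=\{i_1,i_2\}$ with $\curv(J)=2-a_1-a_2>0$, hence $a_1\leqslant 0$ after reordering. Second, a purely local argument finishes: choosing a suitable linear functional one writes $v_{i_1}$ as a positive combination of the classes of the other neighbours of $i_2$ (those on the far side of a separating plane when $a_1<0$, resp.\ off the plane of $\lambda(i_2),\lambda(i),\lambda(i')$ when $a_1=0$); multiplying by $v_{i_2}$ expresses the extremal class $v_J$ as a positive combination of classes $v_tv_{i_2}$, extremality forces all of these to be proportional, and the Stanley--Reisner relations then give a contradiction unless $i_2$ has exactly three (case $a_1<0$) or four (case $a_1=0$) neighbours, i.e.\ unless $P$ has a triangular or quadrangular face. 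So either you actually prove (or cite) the toric contraction classification you defer --- at which point your proof is Delaunay's original one --- or you should replace that step by an extremality argument of this kind; in its present form the proposal has a genuine gap at its central step.
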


The recent interest to these results arose in connection with
fullerenes. Mathematically, a fullerene is a simple 3-dimensional
polytope having only pentagonal and hexagonal faces. Buchstaber
\cite{BuchFuller} suggested to study fullerenes from the
perspective of toric topology: presumably, both fields may benefit
from such interaction.

Two theorems above show that (1) there exist quasitoric manifolds
over fullerenes; (2) there are no smooth projective toric
varieties over fullerenes. Therefore, due to their rigid
geometrical nature, smooth projective toric varieties are not
suited for the study of fullerenes. However, quasitoric manifolds
do: this subject will be discussed in the forthcoming paper of
Buchstaber, Erokhovets, Masuda, Panov, and Park.

We should make a remark that Theorems \ref{thm3cols},\ref{thmMain}
are based on completely different methods. Theorem \ref{thm3cols}
essentially relies on the four color theorem: there are no known
proofs which do not use this result. Theorem \ref{thmMain} was
formulated and proved by Delaunay in \cite{Delaunay}, and its
proof is based on the work of Reid \cite{Reid} concerning Mori's
minimality theory for toric varieties. We restate the proof in
more combinatorial topological terms, without referring to this
algebro-geometrical theory, to make the difference between toric
and quasitoric cases more transparent.

\section{Quasitoric manifolds}

Let $M$ be a quasitoric manifold of dimension $2n$. Its orbit
space under the action of $T^n$ is a simple polytope $P$. Let
$\F_1,\ldots,\F_m$ be the facets of the polytope $P$. Any point
$x$ in the interior of a facet $\F_i$ represents an
$(n-1)$-dimensional orbit of the action. The stabilizer of this
orbit is a 1-dimensional toric subgroup $G_i\subset T^n$. We may
assume that $G_i=\exp(\lambda_{i,1},\ldots,\lambda_{i,n})$, where
$(\lambda_{i,1},\ldots,\lambda_{i,n})\in \Zo^n$ is a primitive
integral vector determined uniquely up to sign. One-dimensional
stabilizer subgroups define the so called \emph{characteristic
function}. Let $[m]=\{1,\ldots,m\}$ be the index set of facets of
$P$. Consider the function $\lambda\colon [m]\to \Zo^n$,
$\lambda\colon i\mapsto (\lambda_{i,1},\ldots,\lambda_{i,n})$.
Actually, the value is determined uniquely up to sign, however we
make a choice of this sign arbitrarily (this corresponds to the
choice of orientation of each stabilizer $G_i$). Since the action
of the torus is locally standard, characteristic function
satisfies the condition:

\begin{equation}\label{eqStarCond}
\mbox{if facets }\F_{i_1},\ldots,\F_{i_n} \mbox{ intersect in a
vertex, then } \lambda(i_1),\ldots,\lambda(i_n) \mbox{ is a basis
of } \Zo^n.
\end{equation}

Therefore, with any quasitoric manifold, one can associate a
characteristic pair $(P,\lambda)$, where $P$ is a simple polytope
and $\lambda$ is a characteristic function.

\begin{con} The construction above can be reverted \cite{DJ}. Given any simple
polytope $P$ with facets $\F_1,\ldots,\F_m$ and a function
$\lambda\colon[m]\to\Zo^n$ satisfying condition
\eqref{eqStarCond}, we can construct a quasitoric manifold
$M_{(P,\lambda)}$ as follows. For each $i\in [m]$ let
$G_i=\exp(\lambda(i))\subset T^n$ be the corresponding circle
subgroup. Take any point $x\in P$; it lies in the interior of some
face $F\in P$. We have $F=\F_{i_1}\cap\cdots\cap\F_{i_k}$. Let
$G_x$ denote the toric subgroup $G_{i_1}\times\cdots\times
G_{i_k}\subset T^n$. Consider the identification space
\[
M_{(P,\lambda)}=(P\times T^n)/\sim
\]
where $(x,t)\sim(x',t')$ whenever the points $x$, $x'$ coincide
and $t't^{-1}\in G_x$. One can check that $M_{(P,\lambda)}$ is a
topological manifold, and there is a locally standard action of
$T^n$, which rotates the second coordinate and gives the orbit
space $P$. A canonical smooth structure on $M_{(P,\lambda)}$ was
constructed in \cite{BPR}. Thus $M_{(P,\lambda)}$ is a quasitoric
manifold.
\end{con}

To prove Theorem \ref{thm3cols} one needs to show that every
simple 3-polytope admits a function $\lambda$, satisfying
condition \eqref{eqStarCond}. This is done by the four colors
theorem.

\begin{proof}[Proof of Theorem \ref{thm3cols}]
Let $c\colon\{\F_1,\ldots,\F_m\}\to\{a,b,c,d\}$ be the coloring of
facets of $P$ in four colors such that adjacent facets have
distinct colors. Let $e_1,e_2,e_3$ be the basis of the lattice
$\Zo^3$. Replace colors by the vectors as follows: $a\mapsto e_1$,
$b\mapsto e_2$, $c\mapsto e_3$, $d\mapsto e_1+e_2+e_3$. This gives
a characteristic function, since every three vectors among
$(e_1,e_2,e_3,e_1+e_2+e_3)$ form a basis of the lattice.
\end{proof}

\section{Toric varieties}

Let $V\cong\Ro^n$ be an oriented real vector space with the fixed
lattice $\Zo^n\cong N\subset V$. Recall that \emph{a fan} in
$\Ro^n$ is a collection of convex cones with apex at the origin
such that the intersection of each two cones of the collection is
a face of both and lies in the collection. The fan is called
\emph{complete} if the union of all cones is the whole space $V$.
The fan is called \emph{rational} if all cones are generated by
rational vectors. The cone is called simplicial (resp. unimodular)
if it is generated by linearly independent vectors of $V$ (resp.
part of a basis of the lattice $N$). The fan is called
\emph{simplicial} (resp. \emph{unimodular}) if all its cones are
such. Every unimodular fan is simplicial.

Let $P$ be a convex polytope in the dual space $V^*$. With any
such polytope one associates \emph{the normal fan}: for each face
$F\subset P$ take the cone spanned by outward normal vectors to
the facets of $P$ containing $F$, and take the collection of these
cones. Normal fan is complete. Normal fan of a simple polytope is
simplicial. Normal fans of polytopes are called polytopal fans.
Note that there exist non-polytopal complete fans.

\begin{defin}
A polytope $P$ is called \emph{Delzant} if its normal fan is
unimodular.
\end{defin}

It follows that every Delzant polytope is simple.

Toric varieties are classified by rational fans. Compact toric
varieties correspond to complete fans. Smooth toric varieties
correspond to unimodular fans. Projective toric varieties
correspond to polytopal fans. Therefore, smooth projective toric
varieties correspond to normal fans of Delzant polytopes (i.e.
polytopal unimodular fans). Theorem \ref{thmMain} can be restated
as follows.

\begin{prop}[\cite{Delaunay}]\label{propDelzantMain}
Let $P$ be a 3-dimensional Delzant polytope. Then $P$ has at least
one triangular or quadrangular face.
\end{prop}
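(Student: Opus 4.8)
The plan is to attach to the Delzant polytope $P$ its smooth projective toric threefold $X=X_\Sigma$, where $\Sigma$ is the (unimodular, polytopal) normal fan of $P$, and to extract a triangular or quadrilateral face of $P$ from a single well-chosen extremal contraction of $X$. Assume for contradiction that every facet of $P$ has at least five edges. Under the standard toric dictionary the rays $v_F$ of $\Sigma$ correspond to the facets $F$ of $P$, the $2$-dimensional cones to the edges, and the $3$-dimensional cones to the vertices. For an edge $e=F\cap G$ whose endpoints also lie on facets $F_3$ and $F_4$, applying unimodularity at the two endpoints of $e$ yields a \emph{wall relation}
\[
v_{F_3}+v_{F_4}+b_1(e)\,v_F+b_2(e)\,v_G=0 ,\qquad b_1(e),\,b_2(e)\in\Zo ,
\]
and $b_1(e)+b_2(e)+2$ equals the anticanonical degree $-K_X\cdot C_e$ of the torus-invariant curve $C_e\cong\CP^1$ over $e$.

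The first step is to find an edge of positive anticanonical degree. Each facet $F$, being a face of a Delzant polytope, is itself a Delzant polygon, so its normal fan is a complete unimodular fan in $\Ro^2$ — the fan of a smooth complete toric surface $S_F$ with exactly as many rays as $F$ has edges. For such a surface the self-intersection numbers of its torus-invariant curves sum to $12-3(\#\text{rays})$, and, reading off the wall relations projected into the plane, the self-intersection attached to the edge $e\subset F$ is precisely the coefficient of $v_G$ (where $G$ is the other facet through $e$) in the wall relation of $e$. Summing this identity over all facets, an edge $e=F\cap G$ contributes $b_1(e)+b_2(e)$, and together with Euler's formula (a simple $3$-polytope with $m$ facets has $3m-6$ edges) this gives $\sum_{e}\bigl(b_1(e)+b_2(e)+2\bigr)=12m-4(3m-6)=24$. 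In particular $-K_X\cdot C_e>0$ for at least one edge $e$.

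Since $P$ is Delzant, $\overline{\NE}(X)$ is a pointed rational polyhedral cone, generated by the classes $[C_e]$; by the previous step these are not all $K_X$-nonnegative, so some \emph{extremal} ray $R=\Rg[C_e]$ has $-K_X\cdot C_e>0$, i.e.\ its wall relation satisfies $b_1(e)+b_2(e)\geqslant -1$. This excludes the pattern $b_1(e),\,b_2(e)\leqslant -1$, hence the small (flipping) contractions, and I would then inspect $\phi_R$ — the coarsening of $\Sigma$ across the wall $\langle v_F,v_G\rangle$ — in two cases. If $b_1(e),\,b_2(e)\geqslant 0$, then $\phi_R$ is of fibre type, and, being an extremal contraction with smooth total space, it presents $X$ as $\CP^3$, as a $\CP^2$-bundle over $\CP^1$, or as a $\CP^1$-bundle over a smooth complete toric surface; in each case $P$ has a triangular facet (a face of the simplex, respectively a $\CP^2$-fibre) or a quadrilateral facet (the facet of the $\CP^1$-bundle lying over an edge of the base polygon). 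If instead, say, $b_2(e)\leqslant -1\leqslant b_1(e)$, then $\phi_R$ is divisorial and contracts the toric divisor $D_G$; since $X$ is smooth this contractible toric surface is $\CP^2$ or a Hirzebruch surface, hence has at most four torus-fixed points, so the facet $G$ has at most four edges. Either alternative contradicts the standing assumption, which proves the proposition.

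The crux is the last paragraph: one must reformulate, purely in the language of coarsenings of a unimodular fan, Reid's description \cite{Reid} of the extremal contractions of a smooth projective toric threefold — in particular the fact that a divisorial extremal contraction of such a threefold contracts a divisor whose fan is that of $\CP^2$ or of a Hirzebruch surface — and one must read off from the fibre-type structures the facets they create on $P$. The counting identity $\sum_e(-K_X\cdot C_e)=24$ and the cone-theorem step are routine; the decisive point is that passing to a $K_X$-negative extremal ray is exactly what eliminates the flipping contractions, which by themselves would not force a small face.
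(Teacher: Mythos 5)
Your argument is correct in outline, but it is a genuinely different route from the paper's: in fact it is essentially Delaunay's original proof via Reid's toric Mori theory, which this paper was written expressly to avoid. The points of contact and divergence are these. Your counting identity $\sum_e(-K_X\cdot C_e)=24$, obtained by Noether's formula $K^2+e=12$ applied to each invariant divisor $D_F$ and the observation that the coefficient of $v_G$ in the wall relation is the self-intersection of $C_e$ inside $D_F$, is a pleasant re-derivation of the paper's ``unimodular Gauss--Bonnet'' ($\intX c_1c_2=24\Td=24$); the paper in fact needs even less, namely Lemma \ref{lemCurvProps}(3), which produces a positive-curvature wall by a purely star-shaped-sphere argument. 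The cone step is the same in both proofs (a linear functional positive somewhere on a pointed polyhedral cone is positive on an extremal generator), though the paper gives an elementary proof of strict convexity of $\NE(X_\Delta)$ via the volume polynomial (positivity of $\dd_c\dd_JV_\Delta$ = edge lengths), whereas you quote the toric cone theorem. The real divergence is afterwards: you feed the $K$-negative extremal ray into Reid's/Mori's classification of extremal contractions of a smooth projective (toric) threefold --- fibre type gives $\CP^3$, a $\CP^2$-bundle over $\CP^1$, or a $\CP^1$-bundle over a toric surface, hence a triangular or quadrilateral facet; divisorial type contracts a smooth toric divisor which must be $\CP^2$ or a Hirzebruch surface, hence a facet with $3$ or $4$ edges --- while the paper replaces all of this by a two-line cohomological argument: for an extremal wall $J=\{i_1,i_2\}$ with $\curv(J)=2-a_1-a_2>0$ one has some $a_s\leqslant 0$, and a suitably chosen linear functional expresses $v_J$ as a strictly positive combination of the other wall classes through $i_2$, which contradicts extremality (via the $0/1$ intersection numbers \eqref{eq0or1}) unless $i_2$ has only $3$ or $4$ neighbours. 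What your approach buys is stronger structural information (the actual birational/fibration structure of $X$ at the chosen ray); what it costs is exactly the dependency the paper removes: the decisive divisorial and fibre-type classifications are invoked as a black box from \cite{Reid} (you acknowledge this yourself), including the nontrivial facts that a fibre-type toric extremal contraction of a smooth variety is an honest projective-space bundle and that a contracted divisor must be $\CP^2$ or $\mathbb{F}_a$; note also that these classifications require the $K$-negativity you secured in the first step, so that step is not merely cosmetic. As a proof resting on the cited literature your plan is sound (up to the minor slip that in the divisorial case $b_1\geqslant 0$, not merely $b_1\geqslant -1$); as a self-contained argument it is substantially heavier than the one given here.
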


Let $P$ be an $n$-dimensional Delzant polytope with facets
$\F_1,\ldots,\F_m$ and let $\lambda(i)\in\Zo^n$ be the primitive
outward normal vector to $\F_i$. The unimodularity property of the
normal fan of $P$ implies that the function
$\lambda\colon[m]\to\Zo^n$ defined this way satisfies condition
\eqref{eqStarCond}. Therefore each Delzant polytope determines the
characteristic pair $(P,\lambda)$ in a natural way. Smooth
projective toric variety corresponding to the normal fan of $P$ is
equivariantly diffeomorphic to the quasitoric manifold determined
by the pair $(P,\lambda)$, see \cite{DJ}. Due to this observation
smooth projective toric varieties are particular cases of
quasitoric manifolds.

Now we introduce some notation to be used in the following. Let
$\Delta$ denote a complete unimodular fan in $V\cong\Ro^n$ and $m$
be the number of rays in $\Delta$. Let $X_\Delta$ be the smooth
compact toric variety determined by this fan. The underlying
simplicial sphere $K$ of the fan $\Delta$ has $m$ vertices and
dimension $n-1$. Let $\lambda\colon[m]\to N$ be the characteristic
function, that is $\lambda(i)\in N$ is the primitive generator of
$i$-th ray of the fan $\Delta$.

\textbf{Cohomology.}

\begin{thm}[Danilov--Jurkiewicz]\label{thmDanJurk}
$H^*(X_\Delta;\Zo)\cong \Zo[K]/\Theta$, where
\[
\Zo[K]=\Zo[v_1,\ldots,v_m]/(v_{i_1}\cdots v_{i_s}\mid
\{i_1,\ldots,i_s\}\notin K),\qquad \deg v_i=2
\]
is the Stanley--Reisner ring of the sphere $K$, and ideal $\Theta$
is generated by linear forms $\sum_{i\in
[m]}\langle\mu,\lambda(i)\rangle v_i$, for each linear functional
$\mu\colon N\to\Zo$.
\end{thm}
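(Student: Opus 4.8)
The plan is to deduce the theorem from the realization of $X_\Delta$ as a quasitoric manifold together with a Borel-construction argument. Let $P$ be the simple polytope dual to the sphere $K$ and let $(P,\lambda)$ be the characteristic pair extracted from $\Delta$, so that $X_\Delta$ is equivariantly diffeomorphic to $M:=M_{(P,\lambda)}$. The first step is to record a CW structure on $M$ coming from a generic affine functional on $P$ (equivalently, the Bialynicki--Birula decomposition for a generic one-parameter subgroup of the torus): $M$ is built from cells, one of real dimension $2k$ for each codimension-$k$ face of $P$, i.e.\ for each $k$-dimensional cone of $\Delta$. In particular $H^*(M;\Zo)$ is a free abelian group concentrated in even degrees, with graded ranks the $h$-numbers of $K$; there is no odd cohomology and no torsion.

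Next I would analyze the Borel fibration $M\hookrightarrow M_T:=ET^n\times_{T^n}M\to BT^n$. Both the fiber $M$ and the base $BT^n$ have cohomology concentrated in even degrees, so in the Serre spectral sequence with $\Zo$-coefficients every differential vanishes for parity reasons and the sequence collapses at $E_2$. Hence $H^*(M_T;\Zo)$ is a free module over $H^*(BT^n;\Zo)=\Zo[\mu_1,\dots,\mu_n]$, and the edge homomorphism identifies
\[
H^*(M;\Zo)\;\cong\;H^*_{T^n}(M;\Zo)\otimes_{H^*(BT^n;\Zo)}\Zo,
\]
i.e.\ $H^*(M)$ is the quotient of the equivariant cohomology by the ideal generated by $H^{>0}(BT^n)$.

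The main step is then to identify $H^*_{T^n}(M;\Zo)$ with the Stanley--Reisner ring $\Zo[K]$ as a graded algebra over $H^*(BT^n)$. For each facet $\F_i$ its preimage in $M_T$ is a codimension-$2$ submanifold carrying an equivariant Thom/Euler class $v_i\in H^2_{T^n}(M)$, and I claim $v_i\mapsto v_i$ induces an isomorphism $\Zo[K]\xrightarrow{\ \sim\ }H^*_{T^n}(M)$. Surjectivity, and the monomial relations $v_{i_1}\cdots v_{i_s}=0$ whenever $\{i_1,\dots,i_s\}\notin K$, follow because the corresponding facets have empty intersection, so the submanifolds are disjoint; that these are all the relations can be seen either by comparing graded ranks over $H^*(BT^n)$ (both sides are free of total rank equal to the number of vertices of $P$, i.e.\ $\sum_k h_k$) or by identifying $M_T$ with the Davis--Januszkiewicz homotopy colimit over the face poset of $P$ and computing its cohomology by a Mayer--Vietoris argument. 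Under this isomorphism the module structure is the asserted one: a class $\mu\in H^2(BT^n)$ maps to $\sum_{i}\langle\mu,\lambda(i)\rangle v_i$, which is the statement that the equivariant first Chern class pulled back along $\mu$ decomposes according to the characteristic function, and is checked by restriction to the $T^n$-fixed points, where it reduces to condition \eqref{eqStarCond} that $\lambda(i_1),\dots,\lambda(i_n)$ form a basis at each vertex.

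Combining the last two steps gives $H^*(X_\Delta;\Zo)\cong\Zo[K]/(\sum_i\langle\mu,\lambda(i)\rangle v_i\mid \mu)=\Zo[K]/\Theta$. I expect the delicate point to be the third step: producing the equivariant classes $v_i$ with the correct Thom-class normalization, verifying that there are no hidden relations beyond the Stanley--Reisner ones, and tracking the $H^*(BT^n)$-module structure. For the "no further relations" part the essential input is the rank bookkeeping via the $h$-vector --- equivalently, the fact that $\Theta$ is generated by a linear system of parameters on the Cohen--Macaulay ring $\Zo[K]$ --- while everything else is either the parity collapse of a spectral sequence or elementary geometry of the face structure of $P$.
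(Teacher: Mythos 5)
The paper itself gives no proof of this statement: it is quoted as the classical Danilov--Jurkiewicz theorem, so there is nothing internal to compare with. Your sketch is the standard Davis--Januszkiewicz-style argument (even-dimensional cell structure, parity collapse of the Serre spectral sequence of the Borel fibration, identification of the equivariant cohomology with the Stanley--Reisner ring, and the linear relations read off by restricting $H^2(BT^n)$), and in the projective case this route can be made to work. However, there is a genuine gap in generality: the theorem as stated in the paper concerns an \emph{arbitrary} complete unimodular fan $\Delta$, and your very first step --- ``let $P$ be the simple polytope dual to the sphere $K$'' and realize $X_\Delta$ as the quasitoric manifold $M_{(P,\lambda)}$ --- is simply not available then. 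The paper explicitly notes that non-polytopal complete fans exist and that there are smooth compact non-projective toric varieties which are not quasitoric (\cite{Su}); smooth complete non-projective fans already occur in dimension $3$. So your argument only covers polytopal $\Delta$ (equivalently, projective $X_\Delta$), which suffices for the application to Theorem 2 but not for the theorem as stated, nor for the unimodular Gauss--Bonnet proposition, which is formulated for arbitrary complete unimodular $3$-fans. To repair this one must argue on $X_\Delta$ directly: the Borel construction / homotopy colimit over the face poset of the fan goes through verbatim, but the vanishing of odd cohomology and freeness over $\Zo$ need an argument not based on a Morse function on $P$ (Danilov's original proof, or a Bialynicki--Birula-type decomposition whose validity in the complete non-projective case has to be justified, or reduction to the projective case by toric blow-ups).

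Two smaller points inside the projective case. First, your claimed CW structure is misstated: the decomposition coming from a generic functional has one cell for each \emph{vertex} of $P$ (each fixed point, i.e.\ each maximal cone), of dimension twice the index of that vertex; it does not have one $2k$-cell per codimension-$k$ face --- that count would give the $f$-vector, contradicting your own (correct) assertion that the Betti numbers are the $h$-numbers. Second, surjectivity of $\Zo[K]\to H^*_{T^n}(M)$ does not ``follow because the submanifolds are disjoint'': disjointness only yields the Stanley--Reisner relations. The surjectivity and the absence of further relations require the identification of $M_T$ with the Davis--Januszkiewicz homotopy colimit (or a localization/rank argument over $H^*(BT^n)$, using that $\lambda$ provides a linear system of parameters on the Cohen--Macaulay ring $\Zo[K]$), which you mention only as an alternative; it should be the primary argument. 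With these repairs the proposal is the standard proof of the quasitoric analogue, but as written it does not prove the theorem in the stated generality.
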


A similar theorem was proved by Davis and Januszkiewicz for
quasitoric manifolds: in this case $K$ is a simplicial sphere dual
to a polytope, and $\lambda$ is a characteristic function. Similar
theorems hold with real coefficients instead of integers.

Let $\intX\colon H^{2n}(X_\Delta;\Zo)\to \Zo$ denote the pairing
with the fundamental class of $X_\Delta$. Consider a subset
$I=\{i_1,\ldots,i_n\}\subset[m]$. We have
\begin{equation}\label{eq0or1}
\intX v_{i_1}\cdots v_{i_n}=\begin{cases} 1, \mbox{ if } I\in K\\
0, \mbox{ otherwise.}\end{cases}
\end{equation}
In the following we also need the description of tangent Chern
classes of $X_\Delta$.

\begin{thm}[\cite{Ehl}]
Under the isomorphism of Theorem \ref{thmDanJurk} the $j$-th Chern
class of the tangent bundle of the manifold $X_\Delta$ corresponds
to the elementary symmetric polynomial in the variables~$v_i$:
\[
c_j(X_\Delta)=\sigma_j(v_1,\ldots,v_m)=\sum_{I\in
K,|I|=j}\prod_{i\in I}v_i\in H^{2j}(X_\Delta;\Zo).
\]
\end{thm}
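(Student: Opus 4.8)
The plan is to compute the \emph{total} Chern class $c(X_\Delta)=1+c_1+\cdots+c_n$ in one stroke, by exhibiting the tangent bundle as a quotient of a bundle that splits as a direct sum of line bundles indexed by the rays of $\Delta$. The starting point is the standard generalized Euler sequence for a smooth complete toric variety: if $D_1,\ldots,D_m$ denote the torus-invariant divisors corresponding to the $m$ rays of $\Delta$, there is a short exact sequence of vector bundles
\[
0\longrightarrow \mathcal{O}_{X_\Delta}^{\oplus(m-n)}\longrightarrow \bigoplus_{i=1}^m \mathcal{O}(D_i)\longrightarrow TX_\Delta\longrightarrow 0,
\]
the dual of the cotangent (Euler) sequence. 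Here the left-hand term is $\mathrm{Cl}(X_\Delta)^{\vee}\otimes_{\Zo}\mathcal{O}_{X_\Delta}$; because $X_\Delta$ is smooth and complete, $\mathrm{Cl}(X_\Delta)\cong\Zo^{m-n}$ is free, so this bundle is trivial of rank $m-n$. In purely topological language this is the Davis--Januszkiewicz stable splitting $TX_\Delta\oplus\underline{\Ro}^{\,2(m-n)}\cong\bigoplus_{i=1}^m\mathcal{O}(D_i)$, and either viewpoint supplies the same input.

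Granting the sequence, the argument is formal. First I would apply the Whitney sum formula. Since the total Chern class of a trivial bundle is $1$, multiplicativity gives
\[
c(X_\Delta)=c\Big(\bigoplus_{i=1}^m\mathcal{O}(D_i)\Big)=\prod_{i=1}^m c\big(\mathcal{O}(D_i)\big)=\prod_{i=1}^m\big(1+c_1(\mathcal{O}(D_i))\big).
\]
The second ingredient is the identification $c_1(\mathcal{O}(D_i))=v_i$ under the isomorphism of Theorem \ref{thmDanJurk}: by the very construction of that isomorphism the degree-two generator $v_i$ is the class of the invariant divisor $D_i$, i.e.\ the first Chern class of its associated line bundle. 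Substituting, the total Chern class becomes the product $\prod_{i=1}^m(1+v_i)$ inside the ring $H^*(X_\Delta;\Zo)=\Zo[K]/\Theta$.

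It then remains to read off homogeneous components. Expanding the product over all subsets $I\subseteq[m]$ yields $\prod_{i=1}^m(1+v_i)=\sum_{j}\sigma_j(v_1,\ldots,v_m)$, where $\sigma_j=\sum_{|I|=j}\prod_{i\in I}v_i$ is the $j$-th elementary symmetric polynomial; since $\deg v_i=2$, the degree-$2j$ part is exactly $c_j(X_\Delta)=\sigma_j(v_1,\ldots,v_m)$. Finally I would invoke the Stanley--Reisner relations already built into $\Zo[K]$: the monomial $\prod_{i\in I}v_i$ vanishes whenever $I\notin K$, so in $H^*(X_\Delta;\Zo)$ the symmetric polynomial $\sigma_j$ collapses to $\sum_{I\in K,\,|I|=j}\prod_{i\in I}v_i$, which is precisely the asserted formula.

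The one genuinely substantial step is the Euler sequence together with the identification $c_1(\mathcal{O}(D_i))=v_i$; everything afterwards is purely formal manipulation of Chern classes and monomials. Producing the sequence requires a little toric geometry (or, on the topological side, the construction of the canonical stable complex structure on $X_\Delta$ from the facet line bundles), and the delicate point is matching conventions: one must check that $c_1(\mathcal{O}(D_i))$ equals the generator $v_i$ used in Theorem \ref{thmDanJurk}, not $-v_i$, which amounts to fixing the orientation of each ray consistently with the normal-fan construction. Once these sign and orientation conventions are pinned down, the formula drops out immediately.
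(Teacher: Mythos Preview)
Your argument is correct and is in fact the standard proof of this formula (the Euler sequence together with the identification $c_1(\mathcal{O}(D_i))=v_i$, followed by the Whitney formula and the Stanley--Reisner relations). There is nothing to compare against, however: the paper does not prove this theorem at all but merely quotes it from Ehlers \cite{Ehl}, noting afterwards that Davis--Januszkiewicz proved the analogous statement for quasitoric manifolds. So your write-up supplies an argument where the paper gives none; the approach you chose is exactly the one that appears in standard references on toric varieties.
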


A completely similar theorem was proved in \cite{DJ} for a
quasitoric manifold after introducing certain stably complex
structure on it.

\textbf{Effective cone.}

The notion of effective cone is one of the essential points in the
proof of Theorem \ref{thmMain}. This notion is defined in
algebraic geometry for arbitrary projective varieties, however we
restrict to the smooth case, where it has a clear geometrical
meaning. This subsection is needed only for the completeness of
the exposition: for toric varieties all necessary notions will be
defined in combinatorial-geometrical manner below.

Let $X$ be an arbitrary smooth K\"{a}hler manifold. Each compact
complex curve $C\subset X$ determines a homology class $[C]\in
H_2(X;\Ro)$, which is called \emph{effective}. The set of all
nonnegative linear combinations of effective classes in
$H_2(X;\Ro)$ is called \emph{the effective cone} of the manifold
$X$:
\[
\NE(X)=\left\{\sum r_i[C_i]\in H_2(X;\Ro)\mid r_i\geqslant
0\right\}.
\]

\begin{prop}
$\NE(X)$ is a strictly convex cone in $H_2(X;\Ro)$.
\end{prop}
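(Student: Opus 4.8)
The plan is to separate the two assertions hidden in the phrase ``strictly convex cone''. That $\NE(X)$ is a convex cone is immediate from the definition: the set of finite nonnegative linear combinations of any family of vectors is automatically closed under addition and under multiplication by nonnegative scalars. So the entire content of the statement is \emph{strict} convexity, i.e. that $\NE(X)$ contains no line through the origin, equivalently $\NE(X)\cap(-\NE(X))=\{0\}$.

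To prove this I would use the Kähler form. Fix a Kähler metric on $X$ with Kähler form $\omega$, and let $[\omega]\in H^2(X;\Ro)$ be its class. Define the linear functional $\ell\colon H_2(X;\Ro)\to\Ro$ by $\ell(\alpha)=\langle[\omega],\alpha\rangle$. The key point is that for every compact complex curve $C\subset X$ one has $\ell([C])=\int_C\omega>0$: by Wirtinger's theorem the Kähler form restricts to a strictly positive area form on the smooth locus of $C$, and the smooth locus carries full two-dimensional measure. In particular every effective class is nonzero, and $\ell$ is strictly positive on each of them.

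Now for an arbitrary element $\alpha=\sum_i r_i[C_i]\in\NE(X)$ with $r_i\geqslant 0$ we get $\ell(\alpha)=\sum_i r_i\int_{C_i}\omega\geqslant 0$, and this sum vanishes only if every $r_i=0$, i.e. only if $\alpha=0$. Thus $\NE(X)$ is contained in the closed half-space $\{\ell\geqslant 0\}$ and meets the bounding hyperplane $\{\ell=0\}$ only at the origin. Consequently, if both $\alpha$ and $-\alpha$ lie in $\NE(X)$, then $\ell(\alpha)\geqslant 0$ and $\ell(-\alpha)=-\ell(\alpha)\geqslant 0$ force $\ell(\alpha)=0$, hence $\alpha=0$. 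This is exactly strict convexity.

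The only real input is the positivity $\int_C\omega>0$, which is where the Kähler hypothesis enters essentially; everything else is formal. I do not anticipate a genuine obstacle, though one should be careful not to conflate $\NE(X)$, defined here via \emph{finite} nonnegative combinations, with its closure: no closedness of the cone is needed, and none is claimed.
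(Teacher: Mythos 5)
Your proof is correct and follows essentially the same route as the paper: both use the K\"ahler class $[\omega]$ as a linear functional that is strictly positive on every effective class, so that $\NE(X)$ lies in a half-space meeting the bounding hyperplane only at the origin. Your write-up is just a more explicit unpacking (via Wirtinger positivity and the $\NE(X)\cap(-\NE(X))=\{0\}$ formulation) of the same argument.
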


\begin{proof}
We need to prove that all effective classes lie in some open
half-space of $H_2(X;\Ro)$. Consider the class of K\"{a}hler form
$\omega\in H^2(X;\Ro)$. For each complex curve $C$ we have
\[
\langle\omega,[C]\rangle=\int_C\omega|_C=\Vol(C)>0.
\]
This means that all effective classes lie in the half-space
\[
\{\alpha\in H_2(X;\Ro)\mid \langle \omega,\alpha\rangle>0\},
\]
which implies the statement.
\end{proof}

\begin{prop}[\cite{Reid}]
Let $X$ be smooth projective toric variety. Then its effective
cone $\NE(X)$ is polyhedral and is generated by the fundamental
classes of torus-invariant 2-spheres (preimages of edges of the
polytope under the projection to the orbit space).
\end{prop}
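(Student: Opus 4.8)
The plan is to show that $\NE(X)$ equals the subcone $\NE_T(X)\subseteq H_2(X;\Ro)$ generated by the classes of torus-invariant curves, and that this subcone is polyhedral. Writing $X=X_\Delta$ with $\Delta$ the normal fan of the Delzant polytope $P$, I would first record that the closures of the one-dimensional orbits of the algebraic torus $(\Co^\times)^n$ are in bijection with the walls (codimension-one cones) of $\Delta$, equivalently with the edges of $P$: for an edge $E\subset P$ its preimage $S_E$ under the moment map $X\to P$ is exactly such an orbit closure, and each $S_E$ is a copy of $\CP^1$, i.e. a $2$-sphere. Since $P$ has finitely many edges, the cone $\NE_T(X)$ generated by the finitely many classes $[S_E]$ is polyhedral, and obviously $\NE_T(X)\subseteq\NE(X)$. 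Everything then reduces to the reverse inclusion.

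For that, since every effective class is a nonnegative combination of classes of irreducible curves, it is enough to prove $[C]\in\NE_T(X)$ for an arbitrary irreducible curve $C\subset X$. The key move is a degeneration: choose a sufficiently generic one-parameter subgroup $\gamma\colon\Co^\times\to(\Co^\times)^n$ and consider the family of translates $\gamma(t)\cdot C$ for $t\in\Co^\times$. As $X$ is projective, the induced morphism from $\Co^\times$ to the Chow variety (or Hilbert scheme) of $X$ extends over $t=0$ by properness, and the flat limit is an effective $1$-cycle $C_0$. Genericity of $\gamma$ forces $C_0$ to be invariant under the whole torus — because the $T$-orbit closure of the point $[C]$ in the Chow variety is a projective $T$-variety, whose generic one-parameter limit lands in its $T$-fixed locus, and $T$-fixed points of the Chow variety are exactly $T$-invariant cycles. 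Hence $C_0$ is a sum, with nonnegative integer multiplicities, of one-dimensional orbit closures, so $[C_0]=\sum_E m_E[S_E]\in\NE_T(X)$.

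To finish I would invoke invariance of the homology class in a connected algebraic family of cycles — the cycle class map from the Chow group to Borel--Moore homology is locally constant there — which gives $[C]=[\gamma(t)\cdot C]=[C_0]$ for every $t$, and hence $[C]\in\NE_T(X)$. Combining with the trivial inclusion yields $\NE(X)=\NE_T(X)$, a polyhedral cone generated by the torus-invariant $2$-spheres $S_E$, which is exactly the asserted description.

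The hard part is the degeneration step and its three ingredients: properness of the Chow/Hilbert functor to produce the limit cycle $C_0$; the fact that a sufficiently generic one-parameter subgroup has the same invariant subvarieties as the full torus, so that $C_0$ is genuinely torus-invariant rather than merely $\gamma$-fixed; and the constancy of the homology class along the one-parameter family. Each of these is standard in the theory of torus actions, but they are the real substance of the argument; a reader who prefers to stay within combinatorics can instead carry out the same degeneration on the level of fans, tracking the limit orbit by orbit, and arrive at the identical conclusion.
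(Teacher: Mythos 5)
Your argument is correct in outline, but it is worth pointing out that it is not the route the paper takes --- in fact the paper does not prove this proposition at all: it is quoted from Reid and used only ``for completeness of the exposition.'' The whole point of the paper's subsequent subsection is to sidestep exactly the algebro-geometric input you supply: the author redefines $\NE(X_\Delta)$ inside $H^{2n-2}(X_\Delta;\Ro)$ as the cone spanned by the wall classes $v_J$ (Poincar\'e dual to the invariant $2$-spheres), and proves the only property needed for Theorem~\ref{thmMain} --- strict convexity --- by purely combinatorial means, via the volume polynomial $V_\Delta$, the identification $\D/\Ann V_\Delta\cong H^*(X_\Delta;\Ro)$, and the observation that $\dd_c\dd_J V_\Delta$ is a positive multiple of the length of the edge dual to $J$. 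Your proof, by contrast, is essentially the classical toric Mori-theoretic argument (Reid's): degenerate an arbitrary irreducible curve by a generic one-parameter subgroup, use properness of the Chow/Hilbert scheme to get a limit cycle, genericity of the one-parameter subgroup plus finiteness of the torus-fixed locus to force the limit to be a nonnegative combination of closures of one-dimensional orbits, and constancy of the cycle class in a connected algebraic family to conclude $[C]\in\NE_T(X)$; together with the finiteness of edges this gives polyhedrality. The steps you flag as ``the real substance'' (properness of the Chow functor, $Y^\gamma=Y^T$ for generic $\gamma$, invariance of the homology class in the family, and the fact that invariant irreducible curves are exactly the orbit closures attached to walls) are indeed standard and can be cited, so the proof stands. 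What your approach buys is the genuine statement about the cone of effective curve classes in $H_2(X;\Ro)$, at the cost of the algebro-geometric machinery; what the paper's approach buys is an elementary, self-contained combinatorial substitute that suffices for the application to $3$-dimensional Delzant polytopes without ever invoking this proposition.
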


The generators of the effective cone are called \emph{extremal
cycles}. Note that in general not all edges of the polytope define
extremal cycles: some of them may lie in the cone generated by
others.

\textbf{Effective cone in toric case: combinatorial-geometrical
approach.}

Here we introduce all the necessary notions from the previous
paragraph in combinatorial manner. Let $X_\Delta$ be the smooth
projective variety corresponding to a polytopal fan $\Delta$.

The simplices of $K$ of codimension $1$ as well as the
corresponding cones of $\Delta$ will be called \emph{the walls}.
For each wall $J=\{i_1,\ldots,i_{n-1}\}\in K$ consider the class
$v_J=v_{i_1}\cdots v_{i_{n-1}}\in H^{2n-2}(X;\Ro)$. Note that
$v_J\neq 0$ by obvious reasons. Consider the cone in
$H^{2n-2}(X_\Delta;\Ro)$ generated by the classes $v_J$ for all
walls $J\in K$:
\[
\NE(X_\Delta)=\left\{\sum r_Jv_J\in H^{2n-2}(X_\Delta;\Ro)\mid
r_J\geqslant 0\right\}
\]

\begin{prop}
For each smooth projective toric variety the effective cone
$\NE(X_\Delta)$ is a strictly convex polyhedral cone in
$H^{2n-2}(X_\Delta;\Ro)$.
\end{prop}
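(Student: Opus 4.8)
The plan is to settle polyhedrality immediately and to spend all the effort on strict convexity, which I will deduce from the existence of a Kähler class on $X_\Delta$. Polyhedrality is automatic: the simplicial sphere $K$ has only finitely many walls $J$, so by construction $\NE(X_\Delta)$ is the conical hull of the finite set of classes $\{v_J\}$, and a finitely generated cone is polyhedral. For strict convexity --- i.e. that $\NE(X_\Delta)$ contains no line --- it is enough, since every generator $v_J$ is nonzero, to produce a linear functional $\phi\colon H^{2n-2}(X_\Delta;\Ro)\to\Ro$ with $\phi(v_J)>0$ for all walls $J\in K$: then a relation $\sum r_Jv_J=0$ with $r_J\geqslant 0$ forces every $r_J=0$, so $\NE(X_\Delta)\cap(-\NE(X_\Delta))=\{0\}$.

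To build $\phi$ I would pair with a Kähler class, mimicking the proof of the earlier proposition on strict convexity. Since $X_\Delta$ is projective it carries a Kähler form with class $\omega\in H^2(X_\Delta;\Ro)$; set $\phi(\alpha)=\intX\alpha\cdot\omega$ for $\alpha\in H^{2n-2}(X_\Delta;\Ro)$, where $\alpha\cdot\omega\in H^{2n}(X_\Delta;\Ro)$. To show $\phi(v_J)>0$, I would identify $v_J$ with the Poincaré dual of the torus-invariant curve $C_J$: under the Danilov--Jurkiewicz isomorphism $v_i$ is the class of the invariant divisor $D_i$, and for a wall $J=\{i_1,\ldots,i_{n-1}\}\in K$ these divisors meet transversally along $C_J$, so $v_J=v_{i_1}\cdots v_{i_{n-1}}$ is dual to $[C_J]$. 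Then
\[
\phi(v_J)=\langle\omega,[C_J]\rangle=\int_{C_J}\omega|_{C_J}=\Vol(C_J)>0,
\]
because the Kähler form restricts to a positive area form on the complex curve $C_J$. Combining this with polyhedrality proves the proposition.

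If one prefers to avoid complex geometry altogether, the same functional has a combinatorial guise: realizing $\Delta$ as the normal fan of a polytope $P=\{u\mid\langle u,\lambda(i)\rangle\leqslant c_i\}$, the class $\omega_P=\sum_i c_iv_i$ is ample, and $\phi(v_J)=\intX v_J\omega_P$ ought to equal the lattice length of the edge of $P$ dual to $J$, which is a positive integer. I expect the main obstacle to lie exactly here, in the verification of positivity: in the combinatorial version one must evaluate $\intX v_Jv_i$ also for indices $i\in J$, which is not covered by \eqref{eq0or1} and requires the linear relations $\Theta$ --- namely the wall relation expressing one of the two rays completing $J$ to a facet of $K$ as a combination of $\lambda(i_1),\ldots,\lambda(i_{n-1})$ and the other such ray. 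Everything else --- finiteness, the separating-functional criterion, and positivity of a Kähler form on a curve --- is routine.
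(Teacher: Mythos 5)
Your argument is correct, but it takes a different route from the paper. You prove strict convexity by pairing with a K\"ahler class: identify $v_J$ with the Poincar\'e dual of the invariant curve $C_J$ (the transversal intersection of the characteristic submanifolds indexed by $J$) and use $\intX v_J\cdot\omega=\int_{C_J}\omega=\Vol(C_J)>0$; together with finite generation by the walls this gives a strictly convex polyhedral cone. That is sound --- it is exactly the mechanism of the earlier proposition on $\NE(X)$ for general K\"ahler manifolds, transported to $H^{2n-2}$ via Poincar\'e duality --- but it relies on complex-geometric input (the K\"ahler form and the identification of $v_J$ with a curve class), which the paper is deliberately avoiding in this section. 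The paper instead works entirely combinatorially: it realizes $H^*(X_\Delta;\Ro)$ as $\D/\Ann V_\Delta$ via the volume polynomial, takes the functional $D\mapsto D\,\dd_c V_\Delta$ with $\dd_c=\sum_i\ct_i\dd_i$ built from the support parameters of a polytope with normal fan $\Delta$, and proves positivity on each $\dd_J$ by the fact (Timorin, Ayzenberg--Masuda) that $\dd_JV_\Delta$ evaluated at $\ct$ is a positive multiple of the length of the edge of $P$ dual to $J$. Your closing paragraph sketches precisely this combinatorial functional $u\mapsto\intX u\cdot\omega_P$ with $\omega_P=\sum_i\ct_i v_i$, and you correctly locate the only nontrivial point --- evaluating $\intX v_Jv_i$ for $i\in J$ via the wall relation (this is Lemma \ref{lemCurvProps}(4), $\intX v_Jv_{i_s}=-a_s$), or equivalently quoting the edge-length interpretation --- but you do not carry it out, deferring instead to the K\"ahler argument. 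In short: your proof buys brevity by reusing the general K\"ahler positivity; the paper's proof buys a self-contained combinatorial statement in line with its goal of bypassing the algebro-geometric machinery.
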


\begin{proof}
Let $V_\Delta\in \Ro[c_1,\ldots,c_m]$ be the volume polynomial of
the fan $\Delta$. By definition,
\[
V_\Delta(c_1,\ldots,c_m)=\frac{1}{n!}\intX(c_1v_1+\ldots+c_mv_m)^n.
\]
It is known (see \cite{PKh}), that the values of this polynomial
are the volumes of simple polytopes with the normal fan $\Delta$.
More precisely, let $P=\{x\in V^*\mid \langle
x,\lambda(i)\rangle\leqslant \ct_i\}$ be a simple convex polytope
with the normal fan $\Delta$ (since $X_\Delta$ is projective, at
least one such polytope exists). The numbers $\ct_i$ are called
the support parameters of $P$. Then we have
$\Vol(P)=V_\Delta(\ct_1,\ldots,\ct_m)$. To avoid the mess, we
denote the formal variables of the volume polynomial by $c_i$, and
concrete real numbers substituted in this polynomial are denoted
by $\ct_i$.

Let $\dd_i=\frac{\dd}{\dd c_i}$ be the differential operators,
acting on $\Ro[c_1,\ldots,c_m]$. Let $\D=\Ro[\dd_1,\ldots,\dd_m]$
be the commutative algebra of differential operators with constant
coefficients, and $\Ann V_\Delta=\{D\in\D\mid DV_\Delta=0\}$ be
the annihilating ideal of the polynomial $V_\Delta$. According to
\cite{PKh2,Tim}, we have
\[
\D/\Ann V_\Delta\cong H^*(X_\Delta;\Ro),\qquad
\dd_i\leftrightarrow v_i.
\]
Moreover, the integration map $\intX\colon H^{2n}(X_\Delta;\Ro)\to
\Ro$ coincides with the natural map $(\D/\Ann V_\Delta)_{n}\to
\Ro$, $D\mapsto DV_\Delta$. To prove the statement it is
sufficient to show that the classes
\[
\{\dd_J=\dd_{i_1}\cdots\dd_{i_{n-1}}\in (\D/\Ann
V_\Delta)_{n-1}\mid J=\{i_1,\ldots,i_{n-1}\}\in K\}
\]
lie in one open half-space.

Let $P$ be a convex polytope with the normal fan $\Delta$ and
support parameters $\ct_i$. Consider the element $\dd_c=\sum_{i\in
[m]}\ct_i\dd_i\in (\D/\Ann V_\Delta)_1$. Recall a simple fact: for
each homogeneous polynomial $\Psi\in \Ro[c_1,\ldots,c_m]$ of
degree $k$ there holds
$\frac{1}{k!}\dd_c^k\Psi=\Psi(\ct_1,\ldots,\ct_m)$ (this is a
particular case of Euler's theorem on homogeneous functions).

\begin{lem}
Let $J\in K$ be a wall. Then $\dd_c\dd_JV_\Delta>0$.
\end{lem}

\begin{proof}
Note that $\dd_JV_\Delta$ is a linear polynomial in variables
$c_i$. Therefore, the number $\dd_c\dd_JV_\Delta$ coincides with
the value of the polynomial $\dd_JV_\Delta$ in the point
$\ct=(\ct_1,\ldots,\ct_m)$ by the preceding remark. It is known
that the value of the polynomial $\dd_JV_\Delta$ in the point
$\ct$ coincides up to positive factor with the length of the edge
$F_J\subset P$, dual to the wall $J\in K$ (this was noted by
Timorin in \cite{Tim}, and in \cite{AyM} we proved that the factor
is the volume of the parallelepiped spanned by
$\lambda(i_1),\ldots,\lambda(i_{n-1})$). Thus
$\dd_c\dd_JV_\Delta>0$.
\end{proof}

According to lemma, all classes $\dd_J\in \D/\Ann V_\Delta$ lie in
the half-space $\{D\mid D\dd_cV_\Delta>0\}$ which implies the
statement.
\end{proof}

\begin{defin}
Let $J=\{i_1,\ldots,i_{n-1}\}\in K$ be a wall such that $v_J\in
H^{2n-2}(X_\Delta;\Ro)$ is a generating element of the effective
cone $\NE(X_\Delta)$. Then $J$ is called \emph{an extremal
simplex} and $v_J$ is called \emph{an extremal class}.
\end{defin}

The condition of being extremal can be written as follows. Suppose
an extremal class $v_J$ is expressed as a sum $\nu_1+\nu_2$, where
$\nu_1,\nu_2\in \NE(X_\Delta)$. Then both elements $\nu_1$,
$\nu_2$ lie in the ray generated by $v_J$.

\begin{rem}
This definition agrees with the general theory. The vector spaces
$H^{2n-2}(X_\Delta;\Ro)$ and $H_2(X_\Delta;\Ro)$ can be identified
by Poincare duality, and under this identification the class
$v_J=v_{i_1}\cdots v_{i_{n-1}}$ corresponds to the fundamental
class of torus-invariant 2-sphere obtained as a transversal
intersection of characteristic submanifolds
$X_{i_1},\ldots,X_{i_{n-1}}$ (preimages of facets
$\F_{i_1},\ldots,\F_{i_{n-1}}$ under the projection to the orbit
space).
\end{rem}

\section{Unimodular geometry of fans}

An arbitrary wall $J=\{i_1,\ldots,i_{n-1}\}\in K$ is contained in
exactly two maximal simplices: $I=\{i_1,\ldots,i_{n-1},i\}$ and
$I'=\{i_1,\ldots,i_{n-1},i'\}$. Both sets of vectors
\[
\{\lambda(i_1),\ldots,\lambda(i_{n-1}),\lambda(i)\},\qquad
\{\lambda(i_1),\ldots,\lambda(i_{n-1}),\lambda(i')\}
\]
are the bases of the lattice. Write $\lambda(i')$ in the first
basis:
\[
\lambda(i')=a_1\lambda(i_1)+\ldots+a_{n-1}\lambda(i_{n-1})-\lambda(i).
\]
(Unimodularity condition of the set $\lambda(I')$ guarantees that
the coefficient at $\lambda(i)$ is $\pm1$. The fact that cones at
$I$ and $I'$ lie on opposite sides of the wall $J$ guarantees that
the coefficient at $\lambda(i)$ is exactly $-1$.) In what follows
we assume that vertices $i_1,\ldots,i_{n-1}$ are ordered such that
$\lambda(i_1),\ldots,\lambda(i_{n-1}),\lambda(i)$ is a positive
basis of the lattice, while, respectively,
$\lambda(i_1),\ldots,\lambda(i_{n-1}),\lambda(i')$ is a negative
basis.

\begin{defin}
The number
\[\curv(J)=2-a_1-\ldots-a_{n-1}\in \Zo\]
is called \emph{the unimodular curvature} of the wall $J$.
\end{defin}

The underlying simplicial complex $K$ of a fan $\Delta$ may be
realized in $V\cong \Ro^n$ as a star-shaped sphere as follows: let
us send the vertex $i$ to the point $\lambda(i)\in V$ and continue
the map on each simplex by linearity. We denote the image of this
map by $\st(K)$; it is a piecewise linear sphere in $V$ winding
around the origin.

We say that $\st(K)$ is concave (resp. convex, resp. flat) at the
wall $J$, if the affine hyperplane, through the points
$\lambda(i_1),\ldots,\lambda(i_{n-1}),\lambda(i)$ separates
$\lambda(i')$ from the origin (resp. does not separate, resp.
contains $\lambda(i')$).

\begin{lem}\label{lemCurvProps}
A unimodular curvature and parameters $a_1,\ldots,a_{n-1}$,
defined above satisfy the following properties.

\begin{enumerate}
\item $a_s=\det(\lambda(i_1),\ldots,\lambda(i_{s-1}),\lambda(i'),
\lambda(i_{s+1}), \ldots,\lambda(i_{n-1}),\lambda(i))$.

\item The star-shaped sphere $\st(K)$ is convex (resp. flat, resp. concave)
at a wall $J$ if and only if $\curv(J)>0$ (resp. $\curv(J)=0$,
resp. $\curv(J)<0$).

\item In complete simplicial fan there exists a wall of positive
curvature.

\item $\intX v_Jv_{i_s}=-a_s$, \quad $\curv(J)=\intX v_J(\sum_{t\in[m]}v_t)$.
\end{enumerate}
\end{lem}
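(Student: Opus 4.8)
\textbf{Proof plan for Lemma \ref{lemCurvProps}.}

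The plan is to prove the four assertions essentially in the order stated, since (1) feeds (2) and (4), and (2) feeds (3). For part (1), I would apply Cramer's rule to the defining relation $\lambda(i')=a_1\lambda(i_1)+\ldots+a_{n-1}\lambda(i_{n-1})-\lambda(i)$. Expressing $a_s$ as the ratio of the determinant obtained by replacing the $s$-th column of the basis $(\lambda(i_1),\ldots,\lambda(i_{n-1}),\lambda(i))$ by $\lambda(i')$, divided by $\det(\lambda(i_1),\ldots,\lambda(i_{n-1}),\lambda(i))=1$, gives the formula after observing that the $-\lambda(i)$ term contributes a column equal to $\lambda(i)$ already present elsewhere and hence drops out of the determinant. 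A small sign bookkeeping, using the chosen ordering convention that the first basis is positive, confirms the stated form with $\lambda(i)$ placed in the last slot.

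For part (2), the condition that $\st(K)$ is convex/flat/concave at $J$ is a statement about on which side of the affine hyperplane $H$ through $\lambda(i_1),\ldots,\lambda(i_{n-1}),\lambda(i)$ the point $\lambda(i')$ and the origin $0$ lie. I would pick the affine functional $\ell$ vanishing on $H$ and normalize it by $\ell(0)$; then the relative position of $\lambda(i')$ versus $0$ is governed by the sign of $\ell(\lambda(i'))/\ell(0)$, and one shows this sign equals the sign of $\curv(J)=2-a_1-\ldots-a_{n-1}$. Concretely, writing $\lambda(i')$ in barycentric-type coordinates against the affine frame $\lambda(i_1),\ldots,\lambda(i_{n-1}),\lambda(i)$ using the relation from part (1), the barycentric coordinate "opposite" the hyperplane is precisely $1-a_1-\ldots-a_{n-1}$ for $\lambda(i')$ versus $1$ for the origin expressed in the same frame minus the shift; careful comparison yields that $\st(K)$ is convex at $J$ (i.e. $H$ does not separate $\lambda(i')$ from $0$) exactly when $2-\sum a_s>0$. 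I expect the sign chase here to be the main obstacle: one must be scrupulous about the orientation conventions fixed just before the lemma and about the fact that $\st(K)$ winds around the origin, so that "the side containing $0$" is the correct notion of convexity.

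For part (3), I would argue by a supporting-hyperplane / extremality argument: pick a vertex $\lambda(i)$ of $\st(K)$ that is extremal in some direction, i.e. maximizes a linear functional $\mu$ over all the $\lambda(j)$. Then every wall $J$ containing $i$ has the property that the opposite vertex $\lambda(i')$ lies on the origin side of the affine span of the facet through $\lambda(i)$, forcing convexity at $J$ by part (2); hence $\curv(J)>0$. (Alternatively one invokes that $\st(K)$ is a PL sphere enclosing the origin, so it cannot be everywhere concave or flat.) Finally, part (4): the formula $\intX v_Jv_{i_s}=-a_s$ follows by applying the integration pairing \eqref{eq0or1} to the Danilov--Jurkiewicz relations of Theorem \ref{thmDanJurk}; namely, multiply the linear relation $\sum_{j}\langle\mu,\lambda(j)\rangle v_j$ (an element of $\Theta$, hence zero in cohomology) by $v_J$, use that $v_Jv_j\ne 0$ only for $j\in\{i,i'\}$ (the two vertices completing the wall to a maximal simplex) and for $j\in J$ (where $v_J v_{i_t}=0$ since a vertex is repeated, so $\{i_1,\dots,i_{n-1},i_t\}\notin K$), obtaining $\langle\mu,\lambda(i)\rangle\intX v_Jv_i+\langle\mu,\lambda(i')\rangle\intX v_Jv_{i'}=0$ for all $\mu$; since $\intX v_Jv_i=\intX v_Jv_{i'}=1$ by \eqref{eq0or1}, this gives $\lambda(i)+\lambda(i')=\sum_s(-a_s)\lambda(i_s)$... and reading off coefficients against the relation of part (1) yields $\intX v_Jv_{i_s}=-a_s$. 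Summing $\intX v_Jv_t$ over all $t\in[m]$ then gives $\intX v_i v_J+\intX v_{i'}v_J+\sum_s\intX v_Jv_{i_s}=1+1-\sum_s a_s=\curv(J)$, as claimed.
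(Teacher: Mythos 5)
Two of your four arguments have genuine problems, and the most serious one is in part (4). Your step ``$v_Jv_{i_t}=0$ since a vertex is repeated, so $\{i_1,\dots,i_{n-1},i_t\}\notin K$'' misreads the Stanley--Reisner relations: the monomial $v_Jv_{i_t}$ is not squarefree, its squarefree support is $J$, which \emph{is} a face of $K$, so this monomial does not lie in the Stanley--Reisner ideal; and \eqref{eq0or1} evaluates only products of $n$ \emph{distinct} variables, so it says nothing about $\intX v_Jv_{i_t}$. Indeed the whole point of part (4) is that $\intX v_Jv_{i_s}=-a_s$ is in general nonzero, so these terms cannot be discarded. With them discarded, your identity becomes $\langle\mu,\lambda(i)\rangle+\langle\mu,\lambda(i')\rangle=0$ for all $\mu$, i.e.\ $\lambda(i)=-\lambda(i')$, which is false and from which the value $-a_s$ cannot be ``read off''. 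The repair is close to what you seem to have intended: keep the $j\in J$ terms, obtaining $\sum_{s}\langle\mu,\lambda(i_s)\rangle\intX v_Jv_{i_s}+\langle\mu,\lambda(i)+\lambda(i')\rangle=0$ for every $\mu$, hence $\sum_s\bigl(\intX v_Jv_{i_s}\bigr)\lambda(i_s)=-(\lambda(i)+\lambda(i'))=-\sum_s a_s\lambda(i_s)$ by \eqref{eqAsRelation}, and conclude by linear independence of $\lambda(i_1),\dots,\lambda(i_{n-1})$. (The paper instead substitutes one cleverly chosen $\mu$, vanishing on $\lambda(i)$ and on $\lambda(i_t)$ for $t\neq s$; either route works once the $j\in J$ terms are treated honestly.) Your derivation of the formula $\curv(J)=\intX v_J\sum_t v_t$ from the first formula is fine and matches the paper.

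In part (3) your main argument also fails: it is not true that every wall in the star of a vertex maximizing a linear functional is convex. A $\mu$-maximal vertex can perfectly well have concave (reflex) walls in its star: in $\Ro^3$ take the top vertex $(0,0,10)$ with consecutive neighbours $(0,1,1)$, $(1,0,1)$, $(2,-0.1,1)$; the plane through $(0,0,10),(1,0,1),(0,1,1)$, namely $9x+9y+z=10$, separates $(2,-0.1,1)$ from the origin, so that wall is concave although the apex is extremal. So extremality of one vertex does not localize a convex wall in the way you claim; your parenthetical fallback --- that a star-shaped sphere winding around the origin cannot be concave or flat at every wall --- is not an ``alternative'' but is exactly the paper's (equally terse) proof of (3), and it is the statement that actually needs justifying. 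Parts (1) and (2) are essentially correct and close to the paper: Cramer's rule is the same computation as the paper's wedge-product argument, and your affine-functional sign analysis in (2) is a legitimate substitute for the paper's expansion of $\det(\lambda(i_1)-\lambda(i'),\dots,\lambda(i)-\lambda(i'))$, provided the sign bookkeeping you defer (the normalization of the basis orderings and the values $\det(\lambda(i_1),\dots,\lambda(i_{n-1}),\lambda(i))=1$, $\det(\lambda(i_1),\dots,\lambda(i_{n-1}),\lambda(i'))=-1$) is actually carried out.
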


\begin{proof}
(1) Take exterior product of the relation
\begin{equation}\label{eqAsRelation}
\lambda(i)+\lambda(i')=\sum_{t=1}^{n-1}a_t\lambda(i_t)
\end{equation}
with the exterior form $\lambda(i_1)\wedge\cdots\wedge
\widehat{\lambda(i_s)}\wedge\cdots\wedge\lambda(i_{n-1})\wedge\lambda(i)$.
The result is the desired relation.

(2) The convexity of the star-shaped sphere $\st(K)$ at a wall $J$
depends on spatial relationship between the affine line through
the points $\lambda(i),\lambda(i')$ and the codimension 2 affine
subspace through the points
$\lambda(i_1),\ldots,\lambda(i_{n-1})$, that is on the sign of the
determinant
\begin{multline*}
\det(\lambda(i_1)-\lambda(i'),\ldots,\lambda(i_{n-1})-\lambda(i'),\lambda(i)-\lambda(i'))
=\\=\det(\lambda(i_1),\ldots,\lambda(i_{n-1}),\lambda(i))-
\det(\lambda(i_1),\ldots,\lambda(i_{n-1}),\lambda(i'))-\\-
\sum_{s=1}^{n-1} \det(\lambda(i_1),\ldots,
\stackrel[s]{}{\lambda(i')},\ldots
\lambda(i_{n-1}),\lambda(i))=1-(-1)-\sum_{s=1}^{n-1}a_s=\curv(J).
\end{multline*}

(3) If the curvature of any wall is non-positive, then the
star-shaped sphere $\st(K)$ could not wind around the origin.

(4) We express the class $v_{i_s}$ through the classes $v_j,
j\notin J$, by using linear relations in the cohomology ring.
Consider the linear functional $\mu$ on the space $V$ such that
$\langle \mu,\lambda(i)\rangle=0$ and
\[
\langle\mu,\lambda(i_t)\rangle=\begin{cases}0,\mbox{ если }t\neq
s,\\ 1,\mbox{ если }t=s.
\end{cases}
\]
Applying $\mu$ to relation \eqref{eqAsRelation}, we get
$\langle\mu,\lambda(i')\rangle=a_s$. It follows that there is a
linear relation
$v_{i_s}+a_sv_{i'}+\sum_{j\notin\{i_1,\ldots,i_{n-1},i,i'\}}C_jv_j$
in the cohomology ring. Let us multiply this relation by $v_J$.
Since $J$ forms a simplex only with vertices $i,i'$,
Stanley--Reisner relations imply
\[
\intX v_Jv_{i_s}=\intX -a_sv_Jv_{i'}=-a_s.
\]
The formula for the curvature easily follows:
\[
\intX \left(v_J\cdot\sum\nolimits_{t\in[m]}v_t\right)=\intX
v_Jv_i+\intX v_Jv_{i'}+\sum_{s=1}^{n-1}\intX
v_Jv_{i_s}=2-\sum_{s=1}^{n-1}a_s=\curv(J).
\]
\end{proof}

There is an interesting corollary (which also gives an alternative
proof of point (3) of the previous Lemma in dimension $n=3$).

\begin{prop}[Unimodular Gauss--Bonnet theorem]
Let $\Delta$ be a unimodular simplicial fan of dimension $3$. Then
the sum of curvatures of all its walls equals $24$.
\end{prop}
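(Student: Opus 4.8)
The plan is to use point (4) of Lemma \ref{lemCurvProps}, which expresses $\curv(J) = \intX v_J\cdot\sum_{t\in[m]}v_t$, and to sum this over all walls $J$ of the fan. Since $\Delta$ has dimension $3$, the complex $K$ is a $2$-dimensional simplicial sphere, the walls $J$ are the edges of $K$, and $v_J = v_i v_j$ for $J = \{i,j\}$. Thus
\[
\sum_{J\in K,\,|J|=2}\curv(J) = \intX\left(\sum_{J\in K,\,|J|=2} v_J\right)\cdot\left(\sum_{t\in[m]} v_t\right) = \intX \sigma_2(v_1,\ldots,v_m)\cdot\sigma_1(v_1,\ldots,v_m),
\]
because $\sum_{|J|=2} v_J$ is exactly the second elementary symmetric polynomial $\sigma_2$ evaluated on the $v_i$ (by definition of the Stanley--Reisner ring, $v_iv_j = 0$ unless $\{i,j\}\in K$). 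By the theorem of Ehlers quoted above, $\sigma_2(v_1,\ldots,v_m) = c_2(X_\Delta)$ and $\sigma_1(v_1,\ldots,v_m) = c_1(X_\Delta)$, so the sum of curvatures equals $\intX c_1(X_\Delta)c_2(X_\Delta) = \langle c_1 c_2, [X_\Delta]\rangle$.

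Now I would invoke the Todd genus / Hirzebruch--Riemann--Roch. For a smooth compact complex $3$-fold, the Todd class is $\Td(X) = 1 + \tfrac{1}{2}c_1 + \tfrac{1}{12}(c_1^2 + c_2) + \tfrac{1}{24}c_1 c_2$, and the Todd genus $\int_X \Td(X) = \chi(X,\mathcal{O}_X)$. For a smooth toric variety the higher cohomology of the structure sheaf vanishes and $H^0(X,\mathcal{O}_X) = \Co$, so $\chi(X,\mathcal{O}_X) = 1$. The only degree-$6$ (top-degree) term of $\Td(X)$ is $\tfrac{1}{24}c_1 c_2$, hence $\tfrac{1}{24}\langle c_1 c_2,[X_\Delta]\rangle = 1$, i.e. $\langle c_1 c_2, [X_\Delta]\rangle = 24$. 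Combining with the previous paragraph gives $\sum_J \curv(J) = 24$. Since the statement of the proposition concerns an arbitrary unimodular simplicial fan of dimension $3$ (not necessarily polytopal), I should note that the argument goes through because $X_\Delta$ is still a smooth compact toric variety (compactness from completeness of the fan), for which the Danilov--Jurkiewicz and Ehlers theorems and the vanishing $H^i(X,\mathcal{O}_X)=0$ for $i>0$ all remain valid.

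Alternatively, to keep the proof in the combinatorial spirit of the paper and avoid quoting Riemann--Roch, I could compute $\intX c_1 c_2$ directly. Triangulating, one writes $c_1 c_2 = \sigma_1\sigma_2 = \sum_i v_i \cdot \sum_{\{j,k\}\in K} v_j v_k$; by \eqref{eq0or1} the pairing $\intX v_i v_j v_k$ is $1$ if $\{i,j,k\}$ is a triangle of $K$ and $0$ otherwise, so $\intX \sigma_1\sigma_2$ counts, with multiplicity, the number of ways to pick a vertex $i$ and an edge $\{j,k\}\in K$ so that $\{i,j,k\}$ is a triangle of $K$. Each triangle $\{a,b,c\}$ of $K$ contributes $6$ such (ordered-vertex, edge) incidences: choose which of the three vertices plays the role of $i$ ($3$ ways) and, for the remaining edge, count it once — but actually each unordered triangle is hit $3$ times (once per choice of the distinguished vertex), giving $\intX c_1 c_2 = 3 f_2(K)$ where $f_2(K)$ is the number of triangles. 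Since $K$ is a simplicial $2$-sphere, Euler's formula $f_0 - f_1 + f_2 = 2$ together with $2f_1 = 3f_2$ gives $f_2 = 2f_0 - 4$. Hmm, that yields $3f_2 = 6f_0 - 12$, which is not constantly $24$; so the naive count must be off and the correct bookkeeping needs $\intX\sigma_1^{\,?}$ — this is exactly the delicate point.

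The main obstacle, then, is the combinatorial route: naively $\intX\sigma_1\sigma_2$ is not a topological constant, so the genuine input must be the identity $\intX c_1 c_2 = 24$ coming from $\chi(X,\mathcal{O}_X) = 1$ via Riemann--Roch (equivalently, from the fact that the Todd genus of a rational smooth projective — or complete — toric $3$-fold is $1$). I would therefore present the Riemann--Roch argument as the clean proof: sum point (4) of Lemma \ref{lemCurvProps} over walls, recognize the total as $\intX c_1(X_\Delta)c_2(X_\Delta)$ using the Ehlers and Danilov--Jurkiewicz theorems, and conclude via $\tfrac{1}{24}c_1c_2$ being the top Todd term and $\chi(\mathcal{O}_{X_\Delta}) = 1$. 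The one subtlety to double-check is that $X_\Delta$ for a general (possibly non-polytopal) complete unimodular fan is still complete, so that the Hirzebruch--Riemann--Roch / Todd-genus formula applies — which it does, by the general theory of toric varieties.
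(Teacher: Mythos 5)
Your proof is correct and follows essentially the same route as the paper: sum Lemma \ref{lemCurvProps}(4) over all walls, identify the total with $\intX c_1(X_\Delta)c_2(X_\Delta)$ via the Ehlers theorem, and conclude from the fact that this Chern number equals $24\Td(X_\Delta)=24$. The only difference is cosmetic — you derive the identity $\intX c_1c_2=24\Td$ from the Todd class expansion and $\chi(X_\Delta,\mathcal{O}_{X_\Delta})=1$, whereas the paper simply quotes it for stably complex $6$-manifolds together with the fact that the Todd genus of a smooth compact toric variety is $1$.
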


\begin{proof}
It follows from the previous lemma, that
\[
\sum_{J\in K,|J|=2}\curv(J)=\intX(\sum_{J\in
K,|J|=2}v_J)(\sum_{t\in[m]}v_t)=\intX
c_2(X_\Delta)c_1(X_\Delta)=c_{1,2}(X_\Delta).
\]
It is known that for stably complex manifolds of real dimension
$6$ the Chern number $c_{1,2}(X_\Delta)$ coincides with
$24\Td(X_\Delta)$. Todd genus of a smooth compact toric variety
equals $1$, and the statement follows.
\end{proof}

Now we prove Theorem \ref{thmMain}. Let $\Delta$ be the normal fan
of a Delzant polytope $P$. The walls of this fan are simply the
edges of the 2-dimensional triangulated sphere $K$.

\begin{proof}[Proof of Theorem \ref{thmMain}]
According to Lemma \ref{lemCurvProps}(3), there exists a wall
$\tilde{J}\in K$ of positive curvature. On the other hand, Lemma
\ref{lemCurvProps}(4) implies that the curvature of the wall
$\tilde{J}$ coincides with the value of the linear functional
$H^4(X_\Delta;\Ro)\to\Ro$, $u\mapsto\intX(u\cdot c_1(X_\Delta))$
on the effective class $v_{\tilde{J}}$. Since a linear functional
takes positive value on some element of the effective cone, this
functional should take positive value on some generator of this
cone. Therefore, there exists an extremal wall $J=\{i_1,i_2\}\in
K$ having positive curvature.

Let $a_1,a_2$ be the parameters of the wall $J$, defined earlier.
Since $\curv(J)=2-a_1-a_2>0$ and the numbers $a_1,a_2$ are
integral, we have either $a_1\leqslant 0$, or $a_2\leqslant 0$.
Assume $a_1\leqslant 0$. Consider two cases:

(1) $a_1<0$. Let us prove that in this case $i_2$ is contained in
exactly three maximal cones. As before, let $I=\{i,i_1,i_2\}$,
$I'=\{i',i_1,i_2\}$ be the maximal simplices containing the wall
$J$. Suppose that apart from the vertices $i_1,i,i'$ the vertex
$i_2$ is connected to the vertices $k_1,\ldots,k_p$, $p\geqslant
1$ (we assume that the neighbors of the vertex $i_2$ are
cyclically ordered as $i_1,i',k_1,\ldots,k_p,i$, see
Fig.\ref{pictRays}).

\begin{figure}[h]
\begin{center}
\includegraphics[scale=0.2]{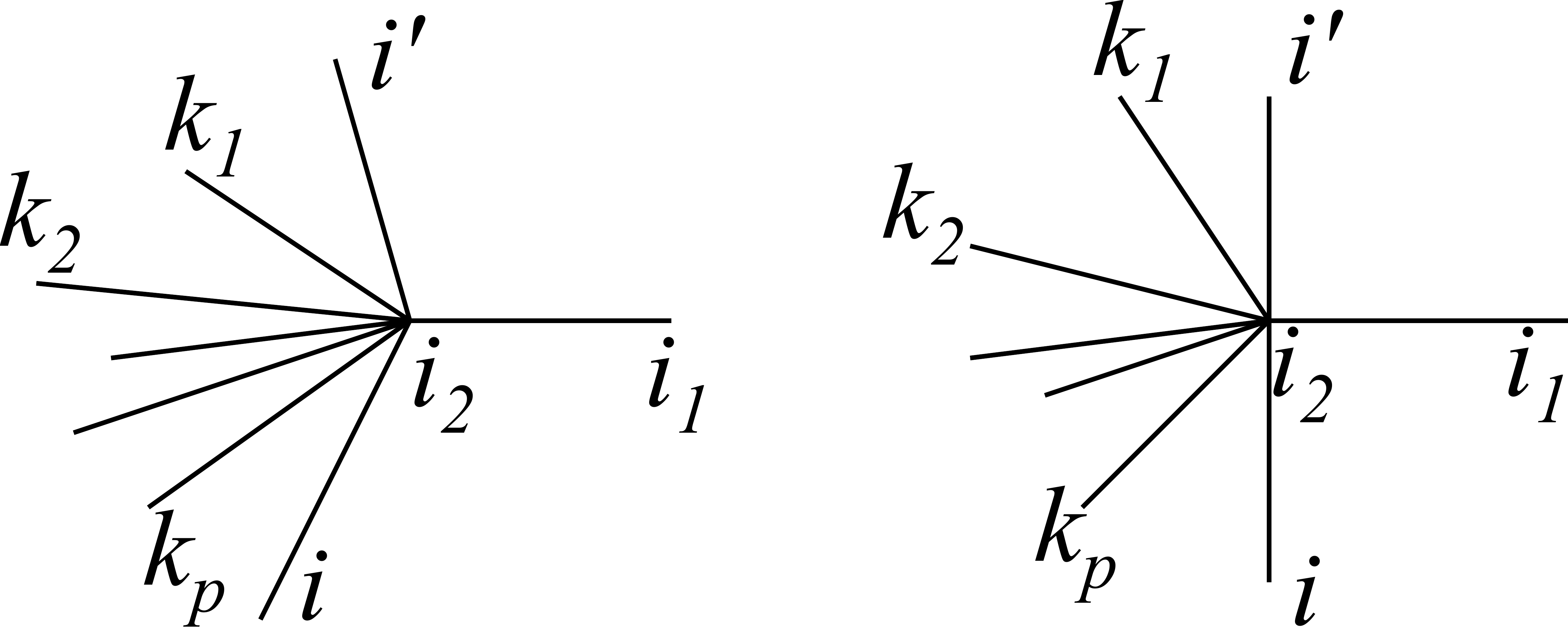}
\end{center}
\caption{The vicinity of the ray $\Rg\lambda(i_2)$ in the first
and the second cases. The ray $\Rg\lambda(i_2)$ points to the
reader.}\label{pictRays}
\end{figure}

According to Lemma \ref{lemCurvProps}(1),
$a_1=\det(\lambda(i'),\lambda(i_2),\lambda(i))<0$. This means,
that the sum of dihedral angles of the cones
$C(I)=\cone(\lambda(i_1),\lambda(i_2),\lambda(i))$ and
$C(I')=\cone(\lambda(i_1),\lambda(i_2),\lambda(i'))$ at the edge
$\Rg\lambda(i_2)$ exceeds one straight angle (see left part of
Fig.\ref{pictRays}). There exists a 2-plane $\Pi$ which contains
the ray $\Rg\lambda(i_2)$ and separates $\lambda(i_1)$ from the
vectors
\begin{equation}\label{eqListVecs}
\lambda(i),\lambda(i'),\lambda(k_1),\ldots,\lambda(k_p).
\end{equation}
Let $\mu$ be the linear functional on $\Ro^3$, annihilating the
plane $\Pi$ and taking value $1$ on the vector $\lambda(i_1)$. By
construction, $\mu$ takes negative values on all vectors from the
list \eqref{eqListVecs}. In the cohomology ring we have a linear
relation
\[
v_{i_1}=\sum_{t\in \{i,i',k_1,\ldots,k_p\}}C_tv_t+\sum_{t\notin
\{i_1,i_2,i,i',k_1,\ldots,k_p\}}D_tv_t,
\]
in which all coefficients $C_t$ are positive. Multiplying this
relation by $v_{i_2}$, we get
\[
v_J=v_{i_1}v_{i_2}=\sum_{t\in
\{i,i',k_1,\ldots,k_p\}}C_tv_tv_{i_2}
\]
(the part of expression, having coefficients $D_t$ vanishes by
Stanley--Reisner relations). Therefore, the class $v_J$ is
expressed as a positive linear combination of the classes
$v_tv_{i_2}$, $t\in \{i,i',k_1,\ldots,k_p\}$. Since $v_J$ was
chosen to be extremal, each of the classes $v_tv_{i_2}$ is
proportional to the class $v_J$. Since all these classes are
nonzero, they are all proportional to each other. This leads to
contradiction. Indeed, according to relation \eqref{eq0or1} we
have $(v_{i'}v_{i_2})v_{k_1}\neq 0$ since $\{i',i_2,k_1\}\in K$,
but $(v_{i_1}v_{i_2})v_{k_1}=0$ since $\{i_1,i_2,k_1\}\notin K$.

(2) $a_1=0$. We prove that in this case the vertex $i_2$ is
contained in four maximal cones. The proof is similar to the
previous case. Assume the contrary: let the vertex $i_2$ have the
neighbors $i,i_1,i',k_1,\ldots,k_p$, $p\geqslant 2$, written in
the cyclic order.

According to Lemma \ref{lemCurvProps}(1), the condition $a_1=0$
implies that the vectors $\lambda(i_2),\lambda(i),\lambda(i')$
belong to a single 2-plane, say $\Pi$. Let $\mu$ be the linear
functional annihilating $\Pi$ and taking value $1$ on the vector
$\lambda(i_1)$. Consequently, $\mu$ takes strictly negative values
on the vectors $\lambda(k_1),\ldots,\lambda(k_p)$. By the same
arguments as before we deduce that the class $v_J=v_{i_1}v_{i_2}$
is written as a positive linear combination of the classes
$v_tv_{i_2}$, $t\in \{k_1,\ldots,k_p\}$. The extremality of the
wall $J$ implies that all these classes (there are at least two of
them by assumption) are proportional to the class $v_J$. Again,
this leads to contradiction: $v_{k_1}v_{i_2}v_i=0$ since
$\{k_1,i_2,i\}\notin K$, but $v_{i_1}v_{i_2}v_i\neq 0$ since
$\{i_1,i_2,i\}\in K$.

We proved that there are no more than four maximal cones
containing $\lambda(i_2)$. There can't be three maximal cones by
obvious geometrical reasons: the vectors
$\lambda(i_2),\lambda(i),\lambda(i')$ belong to a 2-plane and
therefore cannot form a maximal cone.

It was shown that in the sphere $K$ there exists a vertex having
either 3 or 4 neighbors. This means that in the dual polytope $P$
there exists either a triangular or quadrangular face.
\end{proof}

\begin{rem}
The existence of a strictly convex effective cone, and as a
corollary, extremal classes, is the fact, which marks out
projective smooth toric varieties among all quasitoric manifolds.
For general quasitoric manifolds we may still define the
cohomology classes $v_J\in H^{2n-2}(X;\Ro)$ corresponding to the
walls, however their nonnegative linear combinations may span the
whole space $H^{2n-2}(X;\Ro)$ rather than a strictly convex cone.
This is why it is impossible to find ``extremal'' classes with
nice properties.
\end{rem}

\end{document}